\newtheorem{theorem}{Theorem}[section]
\newaliascnt{conj}{theorem}
\newaliascnt{cor}{theorem}
\newaliascnt{lemma}{theorem}
\newaliascnt{fact}{theorem}
\newaliascnt{claim}{theorem}
\newaliascnt{prop}{theorem}
\newaliascnt{definition}{theorem}
\newtheorem{conj}[conj]{Conjecture}
\newtheorem{cor}[cor]{Corollary}
\newtheorem{lemma}[lemma]{Lemma}
\newtheorem{prop}[prop]{Proposition}
\newtheorem{definition}[definition]{Definition}
\theoremstyle{definition}
\newaliascnt{example}{theorem}
\newtheorem{example}[example]{Example}
\theoremstyle{remark}
\newaliascnt{rmk}{theorem}
\newtheorem{remark}[rmk]{Remark}
\def\sek~{\S{}}
\numberwithin{equation}{section}
\newcommand{\diam}{\operatorname{diam}}
\newcommand{\mdk}{\operatorname{md_{\kappa}}}
\newcommand{\rad}{\operatorname{Rad}}
\newcommand{\inrad}{\operatorname{InRad}}
\newcommand{\vol}{\operatorname{Vol}}
\newcommand{\gexp}{\operatorname{gexp}}
\DeclareMathOperator{\arccot}{\operatorname{arccot}}
\newcommand{\alex}{\operatorname{Alex}}	
\newcommand{\distbx}{\operatorname{dist_{\partial X}}}	
\newcommand{\RR}{\mathds{R}}
\newcommand{\ZZ}{\mathds{Z}}
\newcommand{\BA}{\operatorname{BA}}
\renewcommand{\SS}{\mathbf{S}}
\newcommand{\cD}{{\mathcal D}}
\newcommand{\bx}{\partial X}
\newcommand*\circled[1]{\tikz[baseline=(char.base)]{\node[shape=circle,draw,inner sep=2pt] (char) {#1};}}
\begin{document}
\title{Radius Estimates for Alexandrov Space with Boundary}
\author{Jian Ge}
\address[Ge]{Beijing International center for Mathematical Research, Peking University. Beijing 100871, China}
\email{jge@math.pku.edu.cn}

\author{Ronggang Li}
\address[Li]{School of Mathematical Science, Peking University. Beijing 100871, China}
\email{lrg@pku.edu.cn}

\subjclass[2000]{Primary: 53C23, 53C20}
\keywords{Alexandrov space, Riemannian manifold, Radius, Rigidity}

\begin{abstract}
In this note, we study the radius of positively curved or non-negatively curved Alexandrov space with strictly convex boundary, with convexity measured by the Base-Angle defined by Alexander and Bishop. We also estimate the volume of the boundary of non-negatively curved spaces as well as the rigidity case, which can be thought as a non-negatively curved version of a recent result of Grove-Petersen.
\end{abstract}
\maketitle
\section{Introduction}
Let $M^{n}$ be a closed $n$-dimensional Riemannian manifold with Ricci curvature bound from below by $(n-1)$, then by the classical Bonnet-Myers theorem the diameter of $M$ has the upper bound: $\diam(M)\le \pi$. Let $X\in \alex^{n}(1)$, i.e. an $n$-dimensional Alexandrov space with curvature bounded from below by $1$, we have the same diameter estimate $\diam(X)\le \pi$, by \cite{BGP1992}. The positive lower bound of the curvature is crucial here, since for any $X\in \alex^{n-1}(0)$, the cylinder $X\times \RR\in \alex^{n}(0)$ has infinite diameter. On the other hand, if  the Ricci curvature of $M^{n}$ is nonnegative, and $\partial M$ is non-empty with mean curvature satisfies $H\ge (n-1)h>0$, we can still estimate the inner radius of $M$, i.e. the largest radius of a metric ball inscribed inside the manifold: $\inrad(M)\le 1/h$. cf. \cite{Li2014}. Cf. also \cite{Ge2015} for a unified treatment for all lower curvature bounds. In this case, one cannot estimate the diameter as the solid cylinder with cross section a unit disc: $D^{n-1}\times \RR$ shows. For Alexandrov spaces, one expects that a similar estimate holds. First, the mean curvature assumption in \cite{Li2014} needs to be replaced by something meaningful for non-smooth spaces. This has been done by Alexander-Bishop in \cite{AB2010}, where the authors defined a  function called Base-Angle at each foot point.
\begin{definition}[\cite{AB2010}]\label{def:BA}
Let $X$ be an $n$-dimensional Alexandrov space with non-empty boundary $\partial X$. For $x\in \partial X$, the \it{base angle at} $p$ of a chord $\gamma$ of $V$ at an endpoint $p$ is the angle formed by the direction of $\gamma$ and $\partial(\Sigma_x(X))$, where $\Sigma_{x}(X)$ is the space of directions at $x$ of $X$. We call the boundary $\partial X$ has extrinsic curvature $\ge A$  in the base-angle sense at $x$ or $\BA(x)\ge A$, if the base angle $\alpha$ at $x$ of a chord of length $r$ from $x$ satisfies
$$
\liminf_{r\to 0}\frac{2\alpha}{r}\ge A.
$$
\end{definition}
It can be verified that if $X$ is a Riemannian manifold with smooth boundary, a Base-Angle lower bound is equivalent to a lower bound on the principal curvatures the boundary. We will call the boundary $\partial X$ is $A$-convex, if the base-angle $\BA(x)\ge A$ at each foot point, which will be written as $\BA(\partial X)\ge A$. Recall that a point $x\in \partial X$ is called a {\it foot point}, if there exists $y\in X\setminus \partial X$ such that
$$
\rho(y):=|y, \partial X| =|y, x|.
$$
We use $|A, B|$ to denote the distance between subsets $A$ and $B$ in $X$. In \cite{AB2010} it is then proved, among other things, that the inner radius of $X\in \alex^{n}(\kappa)$ with $A$-convex boundary $\partial X$ satisfies the expected estimate, see \autoref{cor:inner}.

In this note, we are interested in the radius estimate for $X\in\alex^{n}(\kappa)$ with $A$-convex boundary $\partial X$. Recall the radius of $X$ at $p$ is defined by
$$
\rad_{p}(X)=\sup\{|p, x|\ |\ x\in X\},
$$
and the radius of $X$ is defined by
$$
\rad(X)=\inf_{p\in X}\rad_{p}(X).
$$

Now we state our main theorems
\begin{theorem}\label{thm:k=0}
Let $X\in\alex^{n} (0)$, with $\BA(\partial X)\ge A> 0$. We have:
$$
\rad(X)\leq \frac{1}{A},
$$
with equality holds if and only if $X$ is isometric to the warped product $[0, A]\times_{t}\partial X$.
\end{theorem}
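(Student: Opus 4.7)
The plan is to select an interior ``center'' $p$, bound $\rad_p(X)$ by $1/A$ using the Alexander--Bishop boundary comparison at a critical point of $\rho(\cdot) = \dis(\cdot, \bx)$, and then identify the warped product in the equality case.

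First I would choose $p \in X$ so that $R := \rho(p) = \inrad(X)$; then the inner radius estimate recalled in \autoref{cor:inner} gives $R \le 1/A$. At this global maximum of $\rho$, the first variation of arclength yields: for every direction $\xi \in \Sigma_p$ there exists a foot point $q \in \bx$ of $p$ with $|p,q| = R$ such that the direction at $p$ pointing to $q$ makes angle at most $\pi/2$ with $\xi$; moreover the geodesic $pq$ meets $\bx$ perpendicularly at $q$, because $q$ is a foot point in a space of curvature $\ge 0$.

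Next, to bound $\rad_p(X)$, fix $x \in X$ and let $\sigma$ be a minimizing geodesic from $p$ to $x$ with initial direction $\xi$. Choose a foot point $q$ produced by the first variation for this $\xi$. Apply Toponogov comparison in $\alex(0)$ to $\triangle(p,q,x)$, combined with the $A$-convexity of $\bx$ at $q$ (which constrains how the segment $qx$ may leave the boundary), using as a model the flat cone $[0, 1/A]\times_{t}\bx$. The outcome should be the sharp bound $|p,x| \le 1/A$, and hence, taking the supremum over $x$, $\rad_p(X) \le 1/A$, which gives $\rad(X) \le 1/A$.

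For the rigidity, assume $\rad(X) = 1/A$. Then every $p' \in X$ satisfies $\rad_{p'}(X) \ge 1/A$; in particular for the center $p$ chosen above, one must have $R = 1/A$ (saturation of the inner radius bound) and the Toponogov/base-angle comparison must be saturated along every minimizing geodesic out of $p$. The equality $R = 1/A$ triggers the rigidity associated with the inner radius estimate, identifying $X$ as the warped product in the statement, while the saturation in the comparison pins down the warping factor as the linear one and the cross-section as $\bx$.

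\emph{Main obstacle.} The crucial step is the sharp estimate $|p,x| \le 1/A$: the naive triangle inequality through a foot point only yields $|p,x| \le R + \rho(x) \le 2/A$. Saving the factor of two requires invoking the $A$-convexity of $\bx$ at the specific foot point $q$ selected by the first variation, not merely the inner radius bound $R \le 1/A$. This is where the base-angle hypothesis (as opposed to mere positivity of $\rho$) must enter essentially, via a warped-product Toponogov comparison; executing this comparison is the heart of the argument.
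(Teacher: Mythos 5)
Your plan correctly identifies where the difficulty lies, but it does not resolve it: the step you flag as the ``main obstacle'' --- the sharp bound $|p,x|\le 1/A$ for arbitrary $x$ --- is exactly the content of the theorem, and the tool you propose for it (a Toponogov-type comparison of the triangle $\triangle(p,q,x)$ against the flat cone $[0,1/A]\times_t\bx$) is not an available theorem in that form and is not how the argument actually closes. What makes the estimate work in the paper is different in two concrete ways. First, one does not need to handle arbitrary $x$: \autoref{lem:rad} shows via first variation along the gradient flow of $\rho$ that the point farthest from the soul $s$ lies on $\bx$, so it suffices to bound $b=\sup_{x\in\bx}|s,x|$. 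Second, the $A$-convexity enters not through a triangle comparison but through the Alexander--Bishop concavity statement (\autoref{thm:AB}): along the geodesic $\gamma$ from $s$ to the farthest boundary point, the function $f=\tfrac12\bigl(\tfrac1A-\rho\circ\gamma\bigr)^2$ satisfies $f''\ge 1$, with $f(0)=\tfrac12(\tfrac1A-a)^2$, $f'(0)=(\tfrac1A-a)\cos\alpha_0\ge 0$ (criticality of $s$), and $f(b)=\tfrac1{2A^2}$ (endpoint on $\bx$). The elementary ODE comparison (\autoref{lem:ode}) then yields the quadratic inequality $b^2+2(\tfrac1A-a)\cos\alpha_0\, b+a^2-\tfrac{2a}{A}\le 0$, hence $b\le\sqrt{\tfrac{2a}{A}-a^2}\le\tfrac1A$. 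Without this one-dimensional differential inequality (or an equivalent substitute), your outline has no mechanism to ``save the factor of two,'' so as written there is a genuine gap at the central step.

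Your rigidity discussion is closer to the mark but inherits the same gap: the paper deduces $a=1/A$ from the chain $\rad(X)\le b\le\sqrt{2a/A-a^2}\le 1/A$, since the middle bound equals $1/A$ only when $a=1/A$, and then invokes the Alexander--Bishop inner-radius rigidity (\autoref{prop:rigidty15}) to obtain the cone $[0,1/A]\times_t\bx$. You would need the quantitative intermediate bound $b\le\sqrt{2a/A-a^2}$ (i.e., \autoref{thm:b:k=0}) to force $a=1/A$; ``saturation of the comparison'' alone, without that explicit inequality in hand, does not yet give it. One further small point: the warped product in the statement should be read as the cone over $\bx$ on the interval $[0,1/A]$ (the inner radius), consistent with \autoref{prop:rigidty15}.
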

	
\begin{theorem}\label{thm:k=1}
Let $X\in\alex^{n} (1)$, with $\BA(\partial X)\ge A\geq 0$. We have:
$$
\rad(X)\leq\arccot (A),
$$
with equality holds if and only if $X$ is isometric to the warped product $[0, \arccot(A)]\times_{\sin(t)}\partial X$.
\end{theorem}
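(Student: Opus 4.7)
The plan is to exhibit a single base point $p_{0}\in X$ with $\rad_{p_{0}}(X)\le \arccot(A)$; the theorem then follows from $\rad(X)\le \rad_{p_{0}}(X)$. Compactness of $X$ (Bonnet-Myers in $\alex^{n}(1)$) yields a maximizer $p_{0}$ of the boundary-distance function $\rho(x):=|x,\partial X|$, and the Alexander-Bishop inradius estimate (\autoref{cor:inner} in the case $\kappa=1$) gives
$$
\rho(p_{0})=\inrad(X)\le \arccot(A).
$$
Extremality of $p_{0}$ forces $(\rho\circ\gamma)'(0^{+})\le 0$ for every geodesic $\gamma$ issuing from $p_{0}$, a ``vanishing first-order'' datum that will drive the comparison below.

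The central step is a second-order comparison for $\rho\circ\gamma$ along an arbitrary minimizing geodesic $\gamma:[0,\ell]\to X$ from $p_{0}$ to any $y\in X$. The hypotheses $X\in\alex^{n}(1)$ and $\BA(\partial X)\ge A$ produce, via Alexander-Bishop's machinery, a differential inequality for $\rho\circ\gamma$ whose maximal solution under the initial data $\rho(0)=\rho(p_{0})\le \arccot(A)$, $\rho'(0^{+})\le 0$ is precisely the radial boundary-distance function in the warped product model $[0,\arccot(A)]\times_{\sin t}\partial X$ measured from the apex. This model majorant becomes nonpositive at parameter $\arccot(A)$, while $\rho$ itself is everywhere nonnegative; the comparison therefore forces $\ell\le \arccot(A)$, and hence $|p_{0},y|\le \arccot(A)$ for every $y\in X$.

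For the rigidity: if $\rad(X)=\arccot(A)$, then necessarily $\rho(p_{0})=\arccot(A)$ and the comparison above saturates along every minimizing geodesic from $p_{0}$. The rigidity companion of Alexander-Bishop's inradius theorem then identifies $X$ with the warped product $[0,\arccot(A)]\times_{\sin t}\partial X$. The main technical obstacle is the differential comparison in the middle paragraph: establishing it along an arbitrary minimizing geodesic from $p_{0}$, rather than only along perpendicular foot-realizing segments as in the inradius setting, while threading in the first-order vanishing condition at the maximum point of $\rho$ so that the correct model majorant (not just some weaker one) appears on the right-hand side.
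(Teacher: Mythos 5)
Your proposal is correct and runs on the same engine as the paper's proof: take the soul $s$ (the maximizer of $\rho$, whose existence and the bound $\rho(s)\le\arccot(A)$ come from \autoref{cor:inner}), note that criticality forces $(\rho\circ\gamma)'(0^{+})\le 0$, and feed the Alexander--Bishop concavity $f''+f\ge 1$ for $f=1-\cos(\arccot(A)-\rho)$ into an ODE comparison along geodesics emanating from $s$. The one structural difference is how the comparison is closed. The paper first proves \autoref{lem:rad} (the farthest point from the soul lies on $\partial X$), runs the comparison only along the geodesic from $s$ to that boundary point, and evaluates at the endpoint where $\rho=0$; this yields the sharper intermediate bound $b\le\arccos\bigl(A/(A\cos a+\sin a)\bigr)$ of \autoref{thm:b:k=1}, which is one of the advertised refinements of the paper. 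You instead run the comparison along an arbitrary minimizing geodesic from $s$ and use the nonnegativity of $\rho$ against the model solution becoming nonpositive by time $\arccot(A)$; this dispenses with \autoref{lem:rad} entirely, at the cost of recovering only the final bound rather than the intermediate one. Two small remarks: the "main technical obstacle" you flag is not actually an obstacle, since Alexander--Bishop's inequality is a statement about $f$ as a function on $X$ and hence already holds in the barrier sense along every geodesic, not just foot-realizing ones; and in the rigidity step the deduction $\rad(X)=\arccot(A)\Rightarrow\rho(s)=\arccot(A)$ requires running the comparison with the true initial value $\rho(s)=a$ (which gives the strict bound $\rad_{s}(X)<\arccot(A)$ whenever $a<\arccot(A)$), after which the reduction to the inradius rigidity of \autoref{prop:rigidty15} is exactly what the paper does.
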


\begin{remark}
As one can easily see, our upper bound of the radius is the same as the upper bound of inner-radius proved in \autoref{cor:inner} by Alexander-Bishop, but our theorem does not imply their estimate since we use the inner radius estimate in our proof of radius estimate. On the other hand, our result gives shaper estimates of inner-radius, in fact, we insert more terms between the inner-radius and Alexander-Bishop's upper bound. See \autoref{thm:b:k=1} and \autoref{thm:b:k=0} for details.
\end{remark}

Let $X\in\alex^{n}(\kappa)$ with nonempty boundary $\partial X$, the \emph{Boundary Conjecture} says that $\partial X$ equipped with the induced path metric is again an Alexandrov space with the same lower curvature bound $\kappa$. In particular, if $\kappa=1$, we expect $\partial X$ has lower curvature bound $1$, thus it would follows from the Boundary Conjecture that $\diam(\partial X)\le \pi$ and $\vol(\partial X)\le \vol(\SS^{n-1})$, where $\SS^{n-1}$ denotes the unit $(n-1)$-sphere. The volume upper bound of $\partial X$ was called Lytchak's Problem in \cite{Pet2007}, and Petrunin proved it using gradient exponential map. The rigidity result is proved only recently by Grove-Petersen \cite{GP2018}. In the \cite{Ge2018}, the first author estimates the volume of Alexandrov space with fixed boundary, where we could think of the convexity of the boundary as positive curvatures. As the classical Gauss equation relates the intrinsic curvature of submanifold and ambient space via the second fundament form. So we propose the following Boundary Conjecture for Alexandrov spaces with curved boundary:
\begin{conj}
Let $X\in \alex^{n}(0)$ and $\BA(\partial X)\ge 1$, then $\partial X\in \alex^{n-1}(1)$.
\end{conj}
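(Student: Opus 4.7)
The plan is to take inspiration from the smooth case, where the result follows from the Gauss equation: if $X$ is Riemannian with $\sec_{X}\ge 0$ and the boundary has second fundamental form $II_{\partial X}\succeq \mathrm{Id}$ (equivalently, all principal curvatures are $\ge 1$), then for any orthonormal $u,v\in T_{p}\partial X$ one has
\[
\sec_{\partial X}(u,v)=\sec_{X}(u,v)+\det\bigl(II|_{\mathrm{span}(u,v)}\bigr)\ge 0+1=1.
\]
The base-angle bound $\BA(\partial X)\ge 1$ is designed precisely as the Alexandrov analog of $II\succeq \mathrm{Id}$, so the conjecture asserts that the Gauss-equation mechanism survives in the synthetic category.

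The main approach I would try is to verify the $4$-point comparison (equivalently, Toponogov angle monotonicity) directly for $\partial X$ equipped with its intrinsic length metric $d^{\mathrm{in}}$. Fix $p\in\partial X$ and three nearby points $q_{1},q_{2},q_{3}\in\partial X$, and push each point slightly into the interior by the gradient-exponential flow of $\rho$, obtaining $\tilde{p}=\gexp_{\epsilon}(p)$ and so on. These lie in $X\setminus\partial X$, where the ambient $\alex^{n}(0)$ comparison applies, and their mutual distances differ from the ambient chord distances $d^{X}(p,q_{i})$ by a quantity which is quadratically controlled by $\BA$ via \autoref{def:BA}. After setting up the candidate comparison configuration in $\SS^{2}(1)$ and comparing against the ambient Euclidean comparison triangles, the base-angle defect should, in the limit $\epsilon\to 0$, absorb precisely the curvature upgrade from $0$ to $1$, mirroring the algebraic identity above. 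Complementarily, an induction on dimension via spaces of directions looks promising: $\Sigma_{p}(X)\in\alex^{n-1}(1)$ always holds, and $\partial\Sigma_{p}(X)=\Sigma_{p}(\partial X)$ sits inside $\Sigma_{p}(X)$ with non-negative base-angle, so if the analogous boundary statement is known in dimension $n-1$ one recovers the local Alexandrov structure at each point of $\partial X$, and a Toponogov-style globalization completes the picture.

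The principal obstacle is the gap between the ambient subspace metric $d^{X}|_{\partial X}$ and the intrinsic length metric $d^{\mathrm{in}}$: short-cuts through the interior make $d^{X}|_{\partial X}\le d^{\mathrm{in}}$ potentially strict, and the Base-Angle hypothesis must be leveraged to show that the defect is of precisely the correct quadratic order to sharpen the ambient curvature bound from $0$ to $1$. This is intimately tied to the classical Boundary Conjecture (the case $\kappa=1$, $A=0$), which is itself open, so any complete proof must either entail a quantitative refinement of that conjecture or side-step it by constructing geodesics of $\partial X$ as limits of nearly-ambient curves whose length defect is controlled by $\BA$. A natural intermediate target, cleaner under taking spaces of directions, is the stronger assertion that $X\in\alex^{n}(\kappa)$ with $\BA(\partial X)\ge A$ implies $\partial X\in\alex^{n-1}(\kappa+A^{2})$, which would interpolate between our stated conjecture and the classical Boundary Conjecture.
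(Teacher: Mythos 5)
The statement you are addressing is stated in the paper as a \emph{conjecture}: the paper offers no proof of it, only partial evidence (the volume bound and rigidity of \autoref{thm:fill01}). Your proposal is likewise not a proof but a program, and its central quantitative step is missing. Concretely, the claim that the distances between the pushed-in points $\tilde q_i=\gexp_\epsilon(q_i)$ differ from the boundary distances ``by a quantity which is quadratically controlled by $\BA$'' is not something \autoref{def:BA} delivers. The base angle condition is a pointwise $\liminf$ along chords emanating from a single boundary point; it provides no uniform two-sided second-order expansion of $d^{X}$ versus $d^{\mathrm{in}}$ along $\partial X$, and no such expansion can be set up before one knows that $(\partial X, d^{\mathrm{in}})$ is a length space with well-behaved geodesics --- which is essentially the content of the Boundary Conjecture you are trying to prove. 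The $4$-point comparison you want to verify is a condition on \emph{intrinsic} boundary distances, so the ``shortcut through the interior'' defect that you flag as the principal obstacle is not a technicality to be absorbed in the limit $\epsilon\to 0$: it is the whole problem, and nothing in the proposal controls it.

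The dimension-reduction alternative has the same circularity. Writing $\partial\Sigma_p(X)=\Sigma_p(\partial X)$ presupposes that $\partial X$ has well-defined spaces of directions for its intrinsic metric, which again requires knowing $\partial X$ is an Alexandrov space; and even granting that, the inductive step needs a globalization (Toponogov) theorem for a space not yet known to be locally Alexandrov, plus a base case that is not supplied. You correctly identify that the $A=0$ instance is the classical Boundary Conjecture, which is open; since your scheme degenerates to that case rather than bypassing it, the proposal cannot close. In short: the heuristic (a synthetic Gauss equation, interpolating as $\partial X\in\alex^{n-1}(\kappa+A^{2})$) matches the motivation the paper itself gives for posing the conjecture, but no step of the proposed argument is actually carried out, and the statement remains open.
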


Our next theorem gives an evidence of this conjecture. Namely we get a solution to the Lytchak's Problem for the non-negatively curved Alexandrov space with $1$-convex boundary, as well as a rigidity result parallel to the one in \cite{GP2018}:
\begin{theorem}\label{thm:fill01}
Let $X\in \alex^{n}(0)$ with $\partial X\neq\emptyset$. Suppose $\BA(\partial X)\ge 1$. Then
$$
\vol_{n-1}(\bx)\leq \vol_{n-1}\big(\SS^{n-1}(1)\big).
$$
Moreover, if $\partial X$ is intrinsically isometric to $\SS^{n-1}$, then $X$ is isometric to the unit disk in $\RR^{n}$.
\end{theorem}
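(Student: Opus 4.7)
\emph{Proof proposal.} The plan is to combine the radius estimate of Theorem~\ref{thm:k=0} with a Gauss-map type comparison from the space of directions at a well-chosen ``center'' of $X$.

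\emph{Step 1 (center point and the spherical ceiling).} Applying Theorem~\ref{thm:k=0} with $A=1$, we have $\rad(X)\leq 1$; the infimum defining $\rad(X)$ is attained because $\rad_p(X)\leq 1$ forces $X$ to be bounded and hence (being complete) compact. Choose $p\in X$ with $\rad_p(X)\leq 1$, so $X\subset\overline{B(p,1)}$. Then $\Sigma_p$ is a compact $(n-1)$-dimensional Alexandrov space of curvature $\geq 1$, so the Burago--Gromov--Perelman volume comparison yields
\[
\vol_{n-1}(\Sigma_p)\;\leq\;\vol_{n-1}\bigl(\SS^{n-1}\bigr),
\]
with equality only when $\Sigma_p$ is isometric to $\SS^{n-1}$.

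\emph{Step 2 (Lipschitz surjection and the volume bound).} For each $\xi\in\Sigma_p$ let $\gamma_\xi(t)=\gexp_p(t\xi)$ and let $T(\xi)\in(0,1]$ be the first time this geodesic meets $\partial X$; set $\Phi(\xi):=\gamma_\xi(T(\xi))\in\partial X$. The map $\Phi\colon\Sigma_p\to\partial X$ is surjective since every $x\in\partial X$ is the endpoint of some minimizing geodesic from $p$, and is well-defined away from a measure-zero branching set via Petrunin's gradient exponential machinery. For $\xi_1,\xi_2\in\Sigma_p$ with $\theta=\angle(\xi_1,\xi_2)$, Toponogov comparison in $\alex(0)$ applied to the triangle $(p,\Phi(\xi_1),\Phi(\xi_2))$ yields
\[
|\Phi(\xi_1),\Phi(\xi_2)|^{2}\;\leq\;T(\xi_1)^{2}+T(\xi_2)^{2}-2T(\xi_1)T(\xi_2)\cos\theta\;\leq\;4\sin^{2}(\theta/2)\;\leq\;\theta^{2},
\]
using $T(\xi_i)\leq 1$. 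Hence $\Phi$ is $1$-Lipschitz into $(\partial X, |\cdot,\cdot|)$. Combining the standard $1$-Lipschitz surjection estimate for Hausdorff measures with the identification of the induced and intrinsic $(n-1)$-dimensional Hausdorff measures on an Alexandrov boundary, Step~1 then gives
\[
\vol_{n-1}(\bx)\;\leq\;\vol_{n-1}(\Sigma_p)\;\leq\;\vol_{n-1}\bigl(\SS^{n-1}\bigr),
\]
which is the asserted inequality.

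\emph{Step 3 (rigidity).} Suppose $\partial X$ is intrinsically isometric to $\SS^{n-1}$, so the outer ends of the volume chain coincide; both interior inequalities must then be equalities. The rigidity case of Burago--Gromov--Perelman forces $\Sigma_p\cong\SS^{n-1}$ isometrically. The equality $\vol_{n-1}(\partial X)=\vol_{n-1}(\Sigma_p)$ together with the $1$-Lipschitz surjection $\Phi$ forces $T(\xi)\equiv 1$ and equality in every Toponogov triangle $(p,\Phi(\xi_1),\Phi(\xi_2))$, since otherwise the bound on $|\Phi(\xi_1),\Phi(\xi_2)|$ above is strict on a positive-measure set and the image has strictly smaller volume. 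In particular $\rad_p(X)=1$, so the rigidity clause of Theorem~\ref{thm:k=0} identifies $X$ isometrically with the warped product $[0,1]\times_t\partial X$; since $\partial X\cong\SS^{n-1}$, this is precisely the unit disk $D^{n}\subset\RR^{n}$ in polar coordinates.

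\emph{Main obstacle.} The delicate technical work is in Step~2: constructing $\Phi$ rigorously in the presence of branching geodesics from $p$, and identifying the induced and intrinsic $(n-1)$-dimensional Hausdorff measures on $\partial X$ so that the Lipschitz-surjection bound actually bounds $\vol_{n-1}(\partial X)$. Petrunin's gradient exponential map (already used in his solution of Lytchak's volume problem for $\alex^{n}(1)$) is the natural tool here. A secondary subtlety arises in Step~3: promoting ``$1$-Lipschitz surjection with matching volumes'' to everywhere equality in Toponogov, and then to the pointwise statement $T\equiv 1$ needed to invoke the rigidity clause of Theorem~\ref{thm:k=0}, requires a careful analysis of the equality case of the comparison inequality along with the BGP rigidity on $\Sigma_p$.
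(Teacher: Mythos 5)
There is a genuine gap at the heart of Step 2: the first-hitting map $\Phi$ is not $1$-Lipschitz, and the displayed chain of inequalities is false. Writing $T_i=T(\xi_i)$ and expanding,
$$
T_1^2+T_2^2-2T_1T_2\cos\theta=\bigl(T_1-T_2\bigr)^2+4\,T_1T_2\sin^2(\theta/2),
$$
so the claimed bound $\leq 4\sin^2(\theta/2)$ fails whenever $T_1\neq T_2$ and $\theta$ is small: the term $\bigl(T_1-T_2\bigr)^2$ is not controlled by $\theta$ at all. To push the hinge comparison through you would need $|T(\xi_1)-T(\xi_2)|\lesssim \theta^2$, which essentially forces $T$ to be constant; the hypotheses give no such control on the first hitting time, which in general is not even Lipschitz in the direction. (A secondary problem is surjectivity: a minimizing geodesic from $p$ to $x\in\bx$ may meet $\bx$ before $x$, in which case $\Phi(\uparrow_p^x)\neq x$.) Since Step 2 is what converts the BGP bound on $\vol_{n-1}(\Sigma_p)$ into a bound on $\vol_{n-1}(\bx)$, and Step 3's identification $T\equiv1$ also rests on it, the argument does not go through as written.

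The repair is exactly what the paper does, following Petrunin: replace the variable-radius map $\xi\mapsto\gexp_p(T(\xi)\xi)$ by the restriction of the gradient exponential map $\gexp_s$ to the sphere of \emph{fixed} radius $b=\rad_s(X)$ in the tangent cone at the soul $s$. With a fixed radius the shortness of $\gexp_s$ does give a $1$-Lipschitz map, the inclusion $\bx\subset\gexp_s\bigl(\partial B_b(o_s)\bigr)$ follows from a critical-point/homology argument (using that $s$ is the unique critical point of the distance to $\bx$), and then $\vol_{n-1}(\bx)\leq\vol_{n-1}\bigl(\partial B_b(o_s)\bigr)=b^{n-1}\vol_{n-1}(\Sigma_s)\leq\vol_{n-1}\bigl(\SS^{n-1}(1)\bigr)$ since $b\leq 1$. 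Your Step 1 and the overall architecture (a center point, BGP on a space of directions, a short surjection onto $\bx$) are the right idea, but the choice of a soul and a fixed-radius sphere is not a technicality to be deferred to the ``main obstacle'' paragraph --- it is what makes the Lipschitz estimate true. For the rigidity, note also that equality gives $\rad_s$-type information at one point only; the paper instead deduces $b=1\Rightarrow a=1$ from $b\leq\sqrt{2a-a^2}$ and invokes the Alexander--Bishop inner-radius rigidity, which is cleaner than appealing to the rigidity clause of \autoref{thm:k=0}.
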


Note that in the classical positive mass theorem implies that the Euclidean $\RR^{n}$ admits not compact perturbation while keeping lower scalar curvature bound $0$. On the hand, the boundary hypersurface is assumed to be smooth or with a restricted type of singularity, cf. \cite{ST2002, ST2018} . Our approach to this problem uses no assumption on the smoothness of the boundary at all. However, we required a much strong curvature condition.\\

Acknowledgment: We would like to thank Stephanie Alexander and Yuguang Shi for their interest in our work and helpful discussions.

\section{Proofs of the Radius Estimates}
One key ingredient of our proof is the following concavity estimates of the distance function $\rho(x)=|x, \partial X|$:
\begin{theorem}[\cite{AB2010}]\label{thm:AB}
Let $X\in \alex^{n}(\kappa)$ and $\BA(\partial X)\ge A$. Let
$$\cD=R(\kappa, A)-\distbx$$
where $R(\kappa, A)$ is the radius of the circle with geodesic curvature equals to $A$ in the $2$-dimensional space form of curvature $\kappa$. If $\kappa>-A^2$, then $\cD$ is nonnegative, and the function $f=\mdk(\cD)$ satisfies
$$
f''+\kappa f \geq 1
$$
where $\mdk(t)=\int_{0}^{t} \frac{1}{\sqrt{\kappa}}\sin(\sqrt{\kappa}s) ds$.
\end{theorem}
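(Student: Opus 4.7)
The plan is to establish the inequality $f'' + \kappa f \geq 1$ in the barrier sense along each unit-speed geodesic $\gamma$ in $X$, by reducing to a two-dimensional model configuration in which equality holds. First I would verify the model computation: in the space form $M^2_\kappa$, let $D$ be the geodesic disk of radius $R = R(\kappa, A)$ whose boundary circle has geodesic curvature $A$, and let $\bar\rho$ denote the distance to $\partial D$. Radial symmetry gives $\bar f := \mdk(R - \bar\rho) = \mdk(r)$, where $r$ is distance to the center of $D$, and the defining ODE $\mdk'' + \kappa \mdk = 1$ for the modified sine function then shows $\bar f'' + \kappa \bar f = 1$ along every unit-speed geodesic of $M^2_\kappa$ (the Jacobi-field computation in constant curvature reduces to the radial ODE). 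This pins down the equality configuration we must dominate from below.

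Next I would run a two-dimensional Toponogov comparison relative to a fixed foot point. Fix $x_0 = \gamma(0) \in X \setminus \partial X$, pick a foot point $p \in \partial X$ of $x_0$, and for small $h$ form the hinge at $p$ with legs $p\gamma(\pm h)$ and apex-angle $\theta$. Transplant this hinge into $M^2_\kappa$ with apex $\bar p \in \partial D$, preserving the leg lengths $|p, \gamma(\pm h)|$ and the angle $\theta$; the lower curvature bound $\kappa$ yields via the Toponogov hinge lemma that the model opposite side is at least $|\gamma(-h),\gamma(h)| = 2h$, while the base-angle hypothesis $\BA(p) \geq A$ forces each chord direction at $p$ to sit at least as far from $\partial\Sigma_p(X)$ as the corresponding model leg sits from the tangent to $\partial D$ at $\bar p$, up to $o(1)$ as $h \to 0$. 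Interpreted in the model disk, these two inequalities together imply that the lifted points $\bar\gamma(\pm h)$ lie within $D$ at a distance from $\partial D$ at most $\rho(\gamma(\pm h)) + o(h)$. Since $\mdk$ is increasing, this becomes $f(\gamma(\pm h)) \geq \bar f(\bar\gamma(\pm h)) - o(h^2)$; applying the model identity $\bar f'' + \kappa \bar f = 1$ along the lifted configuration to expand $\bar f$ to second order then gives
$$
f(\gamma(-h)) + f(\gamma(h)) - 2 f(\gamma(0)) \geq \bigl(1 - \kappa f(\gamma(0))\bigr) h^2 + o(h^2),
$$
which is exactly $f'' + \kappa f \geq 1$ at $\gamma(0)$ in the barrier sense. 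Extension to $\gamma(0) \in \partial X$ uses the $\liminf$ in the definition of $\BA$ to directly match initial value and slope with the model.

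The main obstacle is the non-smoothness of the foot-point assignment and of $\distbx$ itself. Foot points are in general non-unique and $t \mapsto p_t$ need not be continuous, so one cannot differentiate through $p_t$; the standard remedy is to \emph{fix} a single foot point $p$ of $\gamma(0)$, use it as apex at all scales, and absorb the resulting slack $|p, \gamma(\pm h)| - \rho(\gamma(\pm h))$ into $o(h^2)$ via the $1$-Lipschitz property of $\distbx$. Because $\BA(p) \geq A$ is only a $\liminf$, the first-order boundary matching is available along an approaching sequence of scales, and one must check the barrier inequality survives this limit. Finally, the nonnegativity of $\cD$ (equivalently $\distbx \leq R(\kappa, A)$) is tacitly needed to apply $\mdk$ on the intended domain; this is either handled by first establishing the inequality on the open set $\{\cD > 0\}$ and extending by continuity, or by deducing the bound from the differential inequality itself via a maximum-principle argument. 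Piecing together the interior Toponogov concavity with the boundary base-angle estimate into a single inequality valid across $X$ is the technical heart of the argument.
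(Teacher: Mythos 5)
First, note that the paper does not actually prove this theorem: it is quoted from \cite{AB2010} with no internal argument, so your proposal has to be judged as a reconstruction of Alexander--Bishop's proof, and as such it has a genuine gap at its technical heart. The step ``fix a single foot point $p$ of $\gamma(0)$, use it as apex at all scales, and absorb the slack $|p,\gamma(\pm h)|-\rho(\gamma(\pm h))$ into $o(h^2)$ via the $1$-Lipschitz property'' fails: $1$-Lipschitzness is a first-order tool, and the slack is genuinely $\Theta(h^2)$ with the harmful sign. Already in the model itself ($X$ the unit disk in $\RR^2$, $\kappa=0$, $A=1$), take $x_0$ at distance $d$ from the boundary and $\gamma$ tangential ($\alpha_0=\pi/2$): then $|p,\gamma(h)|=d+\frac{h^2}{2d}+O(h^4)$, whereas the asserted inequality $f''+\kappa f\ge 1$ forces $\rho(\gamma(h))\le d-\frac{h^2}{2(1-d)}+o(h^2)$. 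The fixed-apex bound $\rho(\gamma(h))\le|p,\gamma(h)|$ thus misses the required estimate by $\Theta(h^2)$ and even has the wrong sign in the $h^2$-coefficient; equivalently, a hinge at a fixed boundary point only reproduces the distance-to-a-point comparison $\bigl(\mdk(r_p)\bigr)''+\kappa\,\mdk(r_p)\le 1$, whose level sets curve the wrong way, and can never produce the strict concavity of $\distbx$ that the theorem encodes. Your own write-up betrays this: you derive ``distance from $\partial D$ at most $\rho(\gamma(\pm h))+o(h)$,'' which is both the wrong direction (for $f\ge\widetilde f$ one needs the model distance to be \emph{at least} the actual one, up to $o(h^2)$) and the wrong order ($o(h)$ errors cannot yield a second-order barrier inequality).

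The second, related, gap is where the hypothesis enters. By \autoref{def:BA}, $\BA(p)\ge A$ constrains \emph{chords} of $\partial X$, i.e.\ geodesics with \emph{both} endpoints on the boundary; it says nothing about the hinge legs $p\gamma(\pm h)$, whose far endpoints are interior, so your sentence ``the base-angle hypothesis forces each chord direction at $p$ to sit at least as far from $\partial\Sigma_p(X)$ as the corresponding model leg\dots'' never actually injects the boundary convexity into the comparison --- and without it the conclusion is false (take $A$ large with the boundary flat near $p$: your transplanted configuration is unchanged). The missing mechanism is a varying contact point: one must compare $\rho(\gamma(h))$ with $|\gamma(h),q_h|$ for foot points $q_h$ of $\gamma(h)$ and control the boundary chord $pq_h$ through the base-angle condition at its two endpoints together with Toponogov comparison for the resulting quadrilateral-type configuration; this is precisely the semiconvexity machinery Alexander--Bishop build in \cite{AB2010}, and it is the part your outline replaces with the fixed apex. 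What does survive of your proposal: the model identity $\widetilde f''+\kappa\widetilde f=1$ for $\mdk$ of the distance to the center along arbitrary geodesics is correct, and deferring the nonnegativity of $\cD$ (equivalently $\distbx\le R(\kappa,A)$, i.e.\ \autoref{cor:inner}) to a consequence of the differential inequality is consistent with how \cite{AB2010} organize the statements.
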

	
The non-negativity of $\cD$ implies that the \emph{inner radius} estimate of $X$, i.e.
\begin{cor}[\cite{AB2010}, Cor. 1.9]\label{cor:inner}
Let $X$ and $R$ be as above, then the inner radius of $X$ satisfies
$$a:=\max_{x\in X}\rho(x)\le R(\kappa, A).$$
In particular, $a\le \frac1A$ for $\kappa=0$ and $a\le \arccot(A)$ for $\kappa=1$. Moreover, in the case $\kappa=0, A>0$ and $\kappa=1, A\ge 0$, there is a unique point $s\in X$ realized the maximum of $\rho$, which is called the \emph{soul} of $X$.	
\end{cor}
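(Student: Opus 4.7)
The inequality $a\leq R(\kappa,A)$ is immediate from \autoref{thm:AB}: the nonnegativity of $\cD=R(\kappa,A)-\distbx$ rearranges pointwise to $\rho(x)\leq R(\kappa,A)$, and hence $a=\max_{x}\rho(x)\leq R(\kappa,A)$. The concrete values for $\kappa=0$ and $\kappa=1$ just amount to computing $R(\kappa,A)$ in the two model planes: a circle of geodesic curvature $A$ in $\RR^{2}$ has radius $1/A$, while on $\SS^{2}$ the spherical cap bounded by a small circle of geodesic curvature $A$ has radius $\arccot A$.

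For the uniqueness of the soul I would exploit the strict convexity-type estimate $f''+\kappa f\geq 1$ satisfied by $f=\mdk(\cD)$. Since $\mdk$ is strictly increasing on the relevant range, a point maximizes $\rho$ exactly when it minimizes $f$, so it suffices to show that $f$ attains its minimum at a unique point. Assume for contradiction that $f$ takes its minimum value $m$ at two distinct points $s_{1},s_{2}$ and let $\gamma\colon[0,L]\to X$ be a minimizing geodesic between them. The restriction $\phi(t)=f(\gamma(t))$ then satisfies $\phi(0)=\phi(L)=m$, $\phi\geq m$ on $[0,L]$, and the distributional inequality $\phi''+\kappa\phi\geq 1$ inherited from \autoref{thm:AB}.

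The contradiction will come from comparing $\phi$ with the explicit smooth solution $\psi$ of the ODE equality $\psi''+\kappa\psi=1$ with $\psi(0)=\psi(L)=m$. A direct computation (quadratic in $t$ for $\kappa=0$, trigonometric for $\kappa=1$) shows $\psi(t)<m$ on the open interval $(0,L)$, using in the spherical case that $m\leq\mdk(\arccot A)<1$. The difference $h=\phi-\psi$ then obeys $h''+\kappa h\geq 0$ with $h(0)=h(L)=0$, and provided $L<\pi/\sqrt{\kappa}$ (automatic when $\kappa=0$ and ensured by $\diam X\leq\pi$ when $\kappa=1$) the standard maximum principle forces $h\leq 0$ on $[0,L]$. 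Hence $\phi(t)\leq\psi(t)<m$ on $(0,L)$, contradicting $\phi\geq m$.

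The main obstacle is the rigorous interpretation of the inequality $f''+\kappa f\geq 1$ along the non-smooth curve $\phi=f\circ\gamma$: one needs to phrase \autoref{thm:AB} as a $\kappa$-convexity condition along Alexandrov geodesics and then invoke the appropriate weak comparison principle for generalized subsolutions of $h''+\kappa h\geq 0$. Once this formalism is in place, the comparison with $\psi$ is an entirely elementary ODE computation, and the remaining numerical identifications ($R(0,A)=1/A$, $R(1,A)=\arccot A$) are immediate from planar geometry.
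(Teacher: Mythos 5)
The paper does not prove this statement: it is imported verbatim from Alexander--Bishop (\cite{AB2010}, Cor.\ 1.9), and the only in-text justification is the one-line remark that the non-negativity of $\cD$ gives the inner-radius bound. So your proposal has to be judged on its own merits rather than against an internal proof. On that basis it is essentially sound. The first paragraph (non-negativity of $\cD$ rearranges to $\rho\le R(\kappa,A)$, plus the elementary computations $R(0,A)=1/A$, $R(1,A)=\arccot A$) is exactly the intended argument. Your uniqueness argument via comparison of $\phi=f\circ\gamma$ with the exact ODE solution $\psi$ is the standard route; for $\kappa=0$ it can be shortened to the observation that $f''\ge 1$ means $f$ is strictly ($1$-)convex along geodesics, hence has at most one minimizer.

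Two small points deserve attention. First, in the case $\kappa=1$ your maximum principle for $h''+h\ge 0$ with $h(0)=h(L)=0$ genuinely requires $L<\pi$ (for $L=\pi$ the function $\sin t$ is a counterexample), and $\diam X\le\pi$ only gives $L\le\pi$. The borderline case is excluded because $|s_1,s_2|=\pi$ would force $X$ to be a spherical suspension over some $\Sigma$ with $\partial\Sigma\ne\varnothing$, in which case the two poles lie in the closure of $\partial X$ and hence have $\rho=0$, contradicting $\rho(s_i)=a>0$; you should say this. Relatedly, your inequality $m\le\mdk(\arccot A)<1$ fails literally when $A=0$ (where $\mdk(\pi/2)=1$); what you actually need is $m=\mdk(R-a)<1$, which holds because $a>0$. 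Second, as you acknowledge, the inequality $f''+\kappa f\ge 1$ must be read in the barrier/support sense along shortest paths; this is precisely how Alexander--Bishop formulate their concavity theorem, and the weak comparison principle you invoke is valid for such generalized subsolutions, so this is a matter of careful bookkeeping rather than a gap.
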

	
First, we need characterize the set of points with maximal distance to the soul $s\in X$.
\begin{lemma}\label{lem:rad}
Let $X\in \alex^{n}(\kappa)$ with $\kappa\ge 0$ and $\partial X\ne \varnothing$. Let $s$ be the soul of $X$. Then
$$
\rad_{s}(X)= \sup_{x\in \partial X}|s, x|=:b.
$$
\end{lemma}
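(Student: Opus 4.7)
The plan is to establish the nontrivial inequality $\rad_{s}(X)\le\sup_{x\in\partial X}|s,x|$ by contradiction. First observe that $X$ is compact (from $\diam(X)\le\pi$ when $\kappa=1$, or from the inner-radius bound of \autoref{cor:inner} combined with compactness of $\partial X$ when $\kappa=0$), so $|s,\cdot|$ attains its maximum at some point $y_{0}\in X$; setting $L:=|s,y_{0}|=\rad_{s}(X)$, the task reduces to showing $y_{0}\in\partial X$.

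Assume for contradiction that $y_{0}$ is interior, and fix a minimizing geodesic $\gamma:[0,L]\to X$ with $\gamma(0)=s$, $\gamma(L)=y_{0}$. The key step is to extend $\gamma$ slightly past $y_{0}$ inside $X$. I plan to invoke Perelman's doubling construction: the double $\widetilde X=X\cup_{\partial X}X$ is a boundaryless Alexandrov space with the same lower curvature bound $\kappa\ge 0$, and a neighborhood of $y_{0}$ in $\widetilde X$ is isometric to a neighborhood of $y_{0}$ in the interior of $X$. Geodesic extendability in the boundaryless space $\widetilde X$ then yields a prolongation $\widetilde\gamma:[0,L+\delta]\to X$ whose extra segment stays in the interior. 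If $\gamma$ has no cut point at $y_{0}$, the extension remains minimizing on $[0,L+\delta]$, giving $|s,\widetilde\gamma(L+\delta)|=L+\delta>L$ and contradicting the maximality of $L$.

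The remaining case is that $y_{0}$ is a cut point of $s$, in which case there exist distinct minimizing geodesics from $s$ to $y_{0}$ and hence distinct directions $\xi_{1},\xi_{2}\in\Sigma_{y_{0}}$ pointing to $s$. Here I would use the first variation formula: the maximality of $|s,y_{0}|$ forces the set $\uparrow_{y_{0}}^{s}\subset\Sigma_{y_{0}}$ of directions from $y_{0}$ to $s$ to be $\tfrac{\pi}{2}$-dense in the closed $(n-1)$-dimensional Alexandrov space $\Sigma_{y_{0}}$ of curvature $\ge 1$ without boundary. I would then produce a direction $v\in\Sigma_{y_{0}}$ with $\measuredangle(v,\xi)>\pi/2$ for every $\xi\in\uparrow_{y_{0}}^{s}$, so that the geodesic in direction $v$ strictly increases $|s,\cdot|$, again contradicting maximality. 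The main obstacle, which I expect to require the most care, is ruling out the scenario in which $\uparrow_{y_{0}}^{s}$ is genuinely $\tfrac{\pi}{2}$-dense in $\Sigma_{y_{0}}$; I would address this by combining the concavity estimate of \autoref{thm:AB} applied along the several geodesics emanating from $s$ with the soul-uniqueness asserted in \autoref{cor:inner}.
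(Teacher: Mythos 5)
Your strategy has two genuine gaps, and the first is fatal as stated. The step ``geodesic extendability in the boundaryless space $\widetilde X$ then yields a prolongation'' is false: in Alexandrov spaces without boundary, shortest paths need in general \emph{not} be extendable as geodesics (already the double of a flat triangle, i.e.\ the boundary of a degenerate tetrahedron, has curvature $\ge 0$ and no boundary, yet no geodesic can be prolonged through a cone point of total angle $<2\pi$). Passing to Perelman's double therefore does not buy you an extension of $\gamma$ past $y_{0}$, and the ``no cut point'' branch of your argument collapses. The second branch is where the real content lies, and you leave it open: at an interior maximum $y_{0}$ of $|s,\cdot|$, standard critical point theory only tells you that $\Uparrow_{y_{0}}^{s}$ must be $\tfrac{\pi}{2}$-dense in $\Sigma_{y_{0}}$ (otherwise the gradient flow of $|s,\cdot|$ would increase the distance), and interior critical points of distance functions do occur in general Alexandrov spaces; your proposal to ``rule this out'' by combining \autoref{thm:AB} with soul uniqueness is a gesture, not an argument. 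So the case you yourself identify as the main obstacle is exactly the case that is not proved.

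For comparison, the paper's proof sidesteps both difficulties by never looking at a maximizer of $|s,\cdot|$ at all. Given \emph{any} interior point $y$, it takes the geodesic $\gamma$ from $y$ to a foot point $q\in\bx$ and shows that every direction in $\Uparrow_{\gamma(t)}^{s}$ makes angle $\ge\tfrac{\pi}{2}$ with $\gamma'(t)$: if not, the first variation formula would make $\rho=\distbx$ strictly decrease initially along a geodesic from $\gamma(t)$ to $s$, and concavity of $\rho$ would then force $\rho(s)<\rho(\gamma(t))$, contradicting that the soul $s$ maximizes $\rho$. The first variation formula then gives that $|s,\gamma(t)|$ is non-decreasing, so $|s,q|\ge|s,y|$, which yields $\rad_{s}(X)\le\sup_{x\in\bx}|s,x|$ directly. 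This uses only concavity of the boundary distance and first variation --- no geodesic extension and no analysis of critical points --- and is the route you should take.
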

\begin{proof}
For any $y$ in the interior of $X$, let $q\in \bx$ be a foot point such that $|y, \partial X| =|y, q|=:\beta$. Let $\gamma(t):[0,\beta]\rightarrow $ be the unit-speed geodesic from $y$ to $q$. We have:
$$
|\Uparrow_y^s,\gamma^{+}(0)|\geq \frac{\pi}{2},
$$
since otherwise there would exist a geodesic $\alpha$ from $y$ to $s$ with $\alpha(0)=y$ and $|\alpha^+(0),\gamma^+(0)|< \frac{\pi}{2}$, then by the first variation formula
$$
\rho(\alpha(\epsilon))\leq|\alpha(\epsilon)q|<|yq|=\rho(y)
$$
for $\epsilon$ small. Here the set $\Uparrow_{y}^{s}$ consists of initial directions of all the unit speed geodesics from $y$ to $s$. 

On the other hand, $\rho$ is a concave function with the maximum achieved at $s$, it follows that $\rho(\alpha(\cdot))$ is monotone, therefore $\rho(s)<\rho(y)$. Hence a contradiction.
		
Since $\forall t\in[0,\beta]$, $q$ is the foot point achieves the distance form $\gamma(t)$ to $\partial X$, we have:
$$
|\Uparrow_{\gamma(t)}^s,\gamma'(t)|\geq\frac{\pi}{2}.
$$
by replacing the $y$ above by $\gamma(t), t\in[0,\beta]$. Therefore the first variation formula tells $|\gamma(t),s|$ is increasing along $\gamma(t)$. It follows that
$$|qs| \geq |ys|.$$
Therefore the conclusion holds.
\end{proof}

\begin{remark}
It can be showed that $\rad_{s}(X)$ can only be achieved by geodesics from $s$ to some points on $\partial X$. In fact, if $|\Uparrow_{\gamma(t)}^s, \gamma'(t)|>\frac{\pi}{2}$ for some $t\in[0,\beta]$, then we have the strict inequality $|q, s|>|y, s|$. Therefore if there were an interior point $y\notin\partial X$ satisfies $|s, y|=\rad_{s}(X)$, it follows that $|\Uparrow_{\gamma(t)}^s,\gamma'(t)|\equiv\frac{\pi}{2}$ for any $t\in[0,\beta]$. In particular, the equality holds at $q=\gamma(\beta)$, therefore $\uparrow_q^s\in\partial\Sigma_{s}X$. By the convexity of $\partial X$, we have $s\in\partial X$. Hence a contradiction.
\end{remark}
	
The following elementary comparison result for ODEs is needed.
\begin{lemma}\label{lem:ode}
For any $\kappa\geq 0$, let $f$ and $\tilde{f}$ be real functions on $[0,\infty)$ satisfying:
\begin{align*}
f''+\kappa f&\geq 1 \\
\tilde{f}''+\kappa \tilde{f}&=1
\end{align*}	
respectively, while $0\leq f(0)<\frac{1}{\kappa}$ (if $\kappa=0$, define $\frac{1}{\kappa}$ as $\infty$), $f(0)=\tilde{f}(0)$, $f'(0)=\tilde{f}'(0)$. Then
$$
f(t)\geq \tilde{f}(t) \qquad\forall t \in [0,t_{0}]
$$
where $t_0$ is the first zero of $\frac{1}{\kappa}-\widetilde{f}$.
\end{lemma}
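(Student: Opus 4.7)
The plan is to form the difference $g := f - \tilde f$. Subtracting the defining (in)equalities gives
\[
g'' + \kappa g \geq 0, \qquad g(0) = 0, \qquad g'(0) = 0,
\]
so it suffices to show $g \geq 0$ on $[0, t_0]$. When $\kappa = 0$, the inequality collapses to $g'' \geq 0$ with zero initial data, and a double integration yields $g \geq 0$ on all of $[0,\infty)$, matching the convention $1/\kappa = \infty$ and $t_0 = \infty$ in this case.

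The genuine work is at $\kappa > 0$, where the sign of $\kappa g$ is not controlled and one cannot simply integrate. The idea is to test against a positive solution of the homogeneous equation that vanishes exactly at the endpoint. The natural choice, built out of the data, is
\[
h(t) := \tfrac{1}{\kappa} - \tilde f(t).
\]
A direct computation gives $h'' + \kappa h = 0$, while $h(0) = \tfrac{1}{\kappa} - f(0) > 0$ by hypothesis, and $h(t_0) = 0$ is by definition its first zero, so $h > 0$ on $[0, t_0)$.

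Now form the Wronskian
\[
w := g' h - g h'.
\]
Then $w(0) = 0$ and, using $h'' = -\kappa h$,
\[
w' = g'' h - g h'' = (g'' + \kappa g)\, h \geq 0 \quad \text{on } [0, t_0],
\]
since both factors are nonnegative. Hence $w \geq 0$ on $[0,t_0]$. Rewriting this as $(g/h)' = w/h^2 \geq 0$ on $[0, t_0)$ and using $g(0)/h(0) = 0$ gives $g/h \geq 0$, so $g \geq 0$ on $[0, t_0)$, and continuity extends the inequality to $t_0$.

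The main subtle point is the selection of the test function $h$: it must solve the homogeneous equation, vanish at the right-hand endpoint, and remain strictly positive on the open interval so that dividing by $h^2$ is legitimate. The choice $h = \tfrac{1}{\kappa} - \tilde f$ is dictated by these requirements and explains why the conclusion can only be claimed up to the first time $\tilde f$ reaches $1/\kappa$ — past $t_0$ the test function changes sign and the Wronskian argument breaks down, which is consistent with the role this lemma will later play in the radius estimate via \autoref{thm:AB}.
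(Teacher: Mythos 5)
Your proof is correct and is essentially the paper's argument in slightly repackaged form: the paper compares $w=\frac{1}{\kappa}-f$ and $\widetilde{w}=\frac{1}{\kappa}-\widetilde{f}$ directly via the Wronskian $w'\widetilde{w}-\widetilde{w}'w$, which is exactly the negative of your $g'h-gh'$ once you substitute $g=\widetilde{w}-w$ and $h=\widetilde{w}$. Both treatments dispose of $\kappa=0$ by double integration and both rely on the positivity of $\frac{1}{\kappa}-\widetilde{f}$ on $[0,t_0)$ to divide by it, so there is no substantive difference.
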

\begin{proof}
For the case $\kappa>0$, let $w=\frac{1}{\kappa}-f$ and $\tilde{w}=\frac{1}{\kappa}-\tilde{f}$. Then $w(0)=\tilde{w}(0)>0$, and the ordinary functions of $f$ and $\tilde{f}$ makes
\begin{align*}
w''+\kappa w&\leq 0\\
\tilde{w}''+\kappa \tilde{w} &=0.
\end{align*}
Then
\begin{align*}
&\qquad w''(t)\widetilde{w}(t)-\widetilde{w}''(t)w(t)\leq 0  \\
&\Longleftrightarrow \big(w'(t)\widetilde{w}(t)-\widetilde{w}'(t)w(t)\big)'\leq 0  \\
&\Longrightarrow\big(w'(t)\widetilde{w}(t)-\widetilde{w}'(t)w(t)\big) \leq \big(w'(0)\widetilde{w}(0)-\widetilde{w}'(0)w(0)\big)=0 \\
&\Longrightarrow (\frac{\widetilde{w}}{w})'(t)\geq 0 \qquad \text{whenever $w(t)>0$}	\\
&\quad\; \; \; \; (\frac{w}{\widetilde{w}})'(t)\leq 0 \qquad \text{whenever $\widetilde{w}(t)>0$}	\\
&\Longrightarrow w(t)\leq\widetilde{w}(t) \qquad \text{whenever $\widetilde{w}(t)\geq 0$}\\
&\Longleftrightarrow \frac{1}{\kappa}-f(t)\leq\frac{1}{\kappa}-\widetilde{f}(t) \qquad \text{whenever $\frac{1}{\kappa}-\widetilde{f}(t)\geq 0$}\\
&\Longleftrightarrow f(t)\geq\widetilde{f}(t) \qquad \text{whenever $\frac{1}{\kappa}-\widetilde{f}(t)\geq 0$}.\\
\end{align*}

In the case that $\kappa=0$, it is easy to see $f''-\tilde{f}''\geq 0$, thus $f'-\tilde{f}'\geq 0$ by $f'(0)=\tilde{f}'(0)$, and then, $f(t)\geq\tilde{f}(t)$ follows from $f(0)=\tilde{f}(0)$.
\end{proof}	
	
The \autoref{thm:k=1} and \autoref{thm:k=0} are in fact easy corollaries of the following theorems, where we insert one more term between the inner radius estimates of \autoref{cor:inner}.  As we can see easily
$$
a\le \rad(x)\le b,
$$
Recall that $a=\max_{x\in X} \rho(x)$ and $b=\rad_{s}(X)$.
We have:
\begin{theorem}\label{thm:b:k=1}
If $X\in\alex^{n} (1)$ and $\BA(\partial X)\ge A\geq 0$. Then
$$
\rad (X)\leq b\le \arccos \left (\frac{A}{A\cos a+\sin a}\right ).
$$
\end{theorem}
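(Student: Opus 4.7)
The plan is to apply the concavity estimate of \autoref{thm:AB} along a geodesic from the soul $s$ to a point of $\partial X$ realizing $\rad_{s}(X)$. First I would invoke \autoref{lem:rad} to reduce the statement to bounding $b=\sup_{x\in\partial X}|s,x|$, since $\rad(X)\leq\rad_{s}(X)=b$. Fix (up to a standard sequence argument if the supremum is not attained) a point $q\in\partial X$ with $|s,q|=b$ and let $\gamma\colon[0,b]\to X$ be a unit-speed minimizing geodesic from $s$ to $q$.

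With $\kappa=1$ we have $R(1,A)=\arccot A$ and $\mdk(t)=1-\cos t$, so \autoref{thm:AB} gives that $f:=1-\cos(\arccot A-\rho)$ satisfies $f''+f\geq 1$ along $\gamma$. Using the angle-difference identity together with $\cos(\arccot A)=A/\sqrt{1+A^{2}}$ and $\sin(\arccot A)=1/\sqrt{1+A^{2}}$, I would evaluate the endpoints as
$$f(\gamma(0))=1-\frac{A\cos a+\sin a}{\sqrt{1+A^{2}}},\qquad f(\gamma(b))=1-\frac{A}{\sqrt{1+A^{2}}}.$$
Next I would feed this into \autoref{lem:ode} with comparison function $\tilde f$ solving $\tilde f''+\tilde f=1$, $\tilde f(0)=f(\gamma(0))$, $\tilde f'(0)=0$, which integrates to $\tilde f(t)=1-\cos(\arccot A-a)\cos t$. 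The initial-derivative hypothesis $(f\circ\gamma)'(0)\geq\tilde f'(0)=0$ should hold since $s$ maximizes the concave function $\rho$, forcing $\rho\circ\gamma$ to be non-increasing at $0$ and hence $f\circ\gamma$ non-decreasing there. Thus $f(\gamma(t))\geq\tilde f(t)$ on the range of validity.

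Setting $t=b$ and clearing $\sqrt{1+A^{2}}$ gives
$$A\leq(A\cos a+\sin a)\cos b,$$
i.e. $\cos b\geq A/(A\cos a+\sin a)$, which is exactly the claimed estimate $b\leq\arccos(A/(A\cos a+\sin a))$. The main technical obstacle I expect is making the one-sided initial-derivative inequality rigorous in the semiconcave/barrier setting where $\rho$ is only Lipschitz; this should be handled by working with support functions for $\rho$ along $\gamma$ and then passing to the limit. A secondary point is checking that the full interval $[0,b]$ lies inside $[0,t_{0}]=[0,\pi/2]$ so that \autoref{lem:ode} applies up to $t=b$; this should follow from \autoref{cor:inner} together with a monotonicity argument showing that $\rho\circ\gamma$ reaches $0$ before $\tilde f$ exits its comparison window.
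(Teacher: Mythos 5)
Your proposal follows essentially the same route as the paper's proof: the same geodesic from the soul to a farthest boundary point, the same function $f=1-\cos\bigl(\arccot A-\rho\bigr)$ satisfying $f''+f\geq 1$ via \autoref{thm:AB}, and the same comparison against a solution of $\tilde f''+\tilde f=1$ through \autoref{lem:ode}, with the same endpoint evaluation yielding $\cos b\geq A/(A\cos a+\sin a)$. The only (cosmetic) difference is that you take $\tilde f'(0)=0$ and discard the nonnegative initial derivative $\sin(\arccot A-a)\cos\alpha_{0}$ up front --- which needs the routine extension of \autoref{lem:ode} to $f'(0)\geq\tilde f'(0)$, immediate from its proof --- whereas the paper matches $\tilde f'(0)=f'(0)$ exactly and drops the corresponding cross term only in the final inequality; the issue you flag about ensuring $b\leq t_{0}$ is likewise present (and left implicit) in the paper's argument.
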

\begin{proof}
Set $\ell =R(1,A)=\arccot (A)$. Let $\gamma(t)$ be a geodesic of length $b$ with $\gamma(0)=s$ and $\gamma(b)\in \partial X$. Therefore $\distbx(\gamma(0))=a\leq \ell$.
Let
$$h(t)=\rho(\gamma(t)),$$
Then $h$ satisfies
$$
h(0)=a,\ h(b)=0, \ h'(0)=-\cos\alpha_{0},
$$	
where $\alpha_{0}=|\gamma'(0),\Uparrow_{s}^{\bx}|$. Since $s$ is the critical point for the distance function $\rho$, we have $\alpha_{0}\leq \frac{\pi}{2}.$  Define
\begin{align*}
f(t)=1-\cos(\ell-h(t)) \qquad 	t\in[0,b] .
\end{align*}
Since we are working for the case $\kappa=1$, $\mdk(x)=1-\cos x$. Therefore the function $f$ satisfies the following differential inequality:
\begin{align*}
&\qquad	                 f''+f\geq 1 \\
\end{align*}
Let
$$
\widetilde{f}(t)=1-\cos(\ell-a)\cos t+\sin(\ell-a)\sin t\cos\alpha_0.
$$
then one verifies easily:
\begin{align*}
\widetilde{f}(0)&=1-\cos(\ell-a)=f(0) \\
\widetilde{f}'(0)&=\sin(\ell-a)\cos\alpha_{0}=f'(0)
\end{align*}
and
$$
\widetilde{f}''(t)+\widetilde{f}(t)=1
$$
that follows:
$$f(t)\geq\widetilde{f}(t)$$
for $t\le t_{0}$, where the $t_{0}>0$ is the first zero of $1-\tilde f$ by \autoref{lem:ode}.
Especially when $t=b\leq t_0$, we have
\begin{align*}
&\qquad\cos \ell\leq \cos(\ell-a)\cos b-\sin(\ell-a)\sin b\cos\alpha_{0} \\
&\Longrightarrow\cos \ell\leq\cos(\ell-a)\cos b \\
&\Longleftrightarrow\frac{\cos \ell}{\cos(\ell-a)}\leq \cos b \\
&\Longleftrightarrow \frac{A}{A\cos a+\sin a}\leq \cos b \\
&\Longleftrightarrow b\leq \arccos \left( \frac{A}{A\cos a+\sin a} \right)
\end{align*}
\end{proof}
	
\begin{proof}[Proof of \autoref{thm:k=1}]
One observe that
$$\frac{A}{A\cos a+\sin a}\geq \frac{A}{\sqrt{1+A^2}},$$
since $a\leq l\leq \frac{\pi}{2}$.
We have:
$$
b\leq \arccos \left( \frac{A}{A\cos a+\sin a} \right)\le \arccos \frac{A}{\sqrt{1+A^2}}
$$
that is $b\leq \arccot(A)$.
\end{proof}
	
Now we move to the discussion on the case $\kappa =0$.
\begin{theorem}\label{thm:b:k=0}
Let $X\in\alex^{n} (0)$ and $\BA(\partial X)\ge A> 0$. Let $a$ be the inner radius of $X$. We have:
$$
\rad(X)\leq b\le \sqrt{2\frac{a}{A}-a^2}
$$
\end{theorem}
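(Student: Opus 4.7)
The plan is to imitate the proof of \autoref{thm:b:k=1}, just replacing the spherical model functions by their Euclidean analogues, and to take advantage of the fact that the inequality reduces to a simple algebraic one once $\cos\alpha_0 \ge 0$ is used.

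By \autoref{lem:rad}, it suffices to bound $b = \sup_{x\in\partial X}|s,x|$, where $s$ is the soul produced by \autoref{cor:inner}. Let $\gamma:[0,b]\to X$ be a unit-speed minimizing geodesic with $\gamma(0)=s$ and $\gamma(b)\in\partial X$, and set $h(t)=\rho(\gamma(t))$. Then $h(0)=a$, $h(b)=0$, and writing $\alpha_0 = |\gamma'(0),\Uparrow_s^{\partial X}|$, the first variation formula gives $h'(0)=-\cos\alpha_0$, while the fact that $s$ is a maximum (hence critical) point of $\rho$ forces $\alpha_0\le \pi/2$, so $\cos\alpha_0\ge 0$.

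With $\kappa=0$ we have $\mdk(t)=t^2/2$ and $\ell:=R(0,A)=1/A$. Define
\[
 f(t)=\tfrac{1}{2}\bigl(\ell-h(t)\bigr)^2,\qquad t\in[0,b].
\]
By \autoref{thm:AB} applied along $\gamma$, $f$ satisfies $f''\ge 1$. Set
\[
 \widetilde f(t)=\tfrac{1}{2}t^2+(\ell-a)\cos\alpha_0\cdot t+\tfrac{1}{2}(\ell-a)^2,
\]
which solves $\widetilde f''=1$ with $\widetilde f(0)=f(0)$ and $\widetilde f'(0)=f'(0)$. \autoref{lem:ode} (the case $\kappa=0$, where no restriction on $t$ is required) then gives $f(t)\ge \widetilde f(t)$ for every $t\in[0,b]$.

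Evaluating at $t=b$, using $f(b)=\ell^2/2=1/(2A^2)$, one obtains
\[
 \tfrac{1}{2A^{2}}\ge \tfrac{1}{2}b^2+(\ell-a)\cos\alpha_0\cdot b+\tfrac{1}{2}(\ell-a)^2.
\]
A direct computation gives $\tfrac{1}{2A^2}-\tfrac{1}{2}(\ell-a)^2=\tfrac{a}{A}-\tfrac{a^2}{2}$. Dropping the non-negative term $(\ell-a)\cos\alpha_0\cdot b$ (which uses both $\ell\ge a$ from \autoref{cor:inner} and $\cos\alpha_0\ge 0$) yields $b^2\le 2a/A-a^2$, i.e.\ $b\le \sqrt{2a/A-a^2}$, as desired. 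There is no real obstacle here; the only point that requires some care is checking that the $\kappa=0$ branch of \autoref{lem:ode} applies with no upper restriction on $t$, and that the critical point condition on $s$ really forces $\cos\alpha_0\ge 0$ so that the cross term can be discarded with the inequality in the correct direction.
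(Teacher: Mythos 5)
Your proposal is correct and follows essentially the same route as the paper's own proof: the same comparison functions $f(t)=\tfrac12(\ell-h(t))^2$ and $\widetilde f$, the same use of \autoref{thm:AB} and \autoref{lem:ode}, and the same final step of discarding the nonnegative cross term $(\ell-a)\cos\alpha_0\, b$. The only difference is cosmetic — you make explicit the algebraic identity $\tfrac{1}{2A^2}-\tfrac12(\ell-a)^2=\tfrac{a}{A}-\tfrac{a^2}{2}$ and the reduction via \autoref{lem:rad}, which the paper leaves implicit.
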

\begin{proof}
Let $\ell=R(0,A)=\frac{1}{A}$. Suppose $\gamma(t)$ is a geodesic of length $b$, with $\gamma(0)=s$ and $\gamma(b)\in\bx$. Therefore $\distbx(\gamma(0))=a\leq \frac{1}{A}$ by \autoref{cor:inner}.
Let
$$
h(t)=\distbx(\gamma(t)),
$$
then
$$
h(0)=a, \  h(b)=0, \ -h'(0)=\cos\alpha_{0}
$$	
where $\alpha_{0}=|\gamma'(0),\Uparrow_{s}^{\bx}|$. Since $s$ is the critical point for $\rho$, we know $\alpha_{0}\leq \frac{\pi}{2}.$
In this case,
$$
\mdk(x)=\frac{x^2}{2},
$$
thus
\begin{align*}
f(t)=\frac{(\frac{1}{A}-h(t))^2}{2}, \qquad 	0\leq h(t)\leq\frac{1}{A}
\end{align*}
satisfying
$$
f''\geq 1.
$$
Let
$$
\widetilde{f}(t)=
\frac{t^2+2(\frac{1}{A}-a)\cos\alpha_{0} t+(\frac{1}{A}-a)^2}{2}
$$
then
\begin{align*}
\widetilde{f}(0)&=f(0)=\frac{(\frac{1}{A}-a)^2}{2} \\
\widetilde{f}'(0)&=f'(0)=(\frac{1}{A}-a)\cos\alpha_{0}
\end{align*}
and
$$\widetilde{f}''(t)=1$$
It follows that
$$f(t)\geq\widetilde{f}(t).$$
Therefore, when $t=b$, we have
\begin{align*}
&\qquad b^2+2(\frac{1}{A}-a)\cos\alpha_{0} b+(a^2-2\frac{a}{A})\leq 0\\
&\Longrightarrow b\leq \sqrt{2\frac{a}{A}-a^2}
\end{align*}
\end{proof}
	
\begin{proof}[Proof of \autoref{thm:k=0}]
By \autoref{thm:b:k=0} and \autoref{cor:inner}:
$a\leq \frac1A,$
we have:
$$
b\le \sqrt{2\frac{a}{A}-a^2} \leq \frac{1}{A}.
$$
Thus the conclusion follows.
\end{proof}
	
\section{Discussion of the Equality Cases}
In this section we discuss various equality case in the estimates below. Recall that the inner radius $a:=\max_{x\in X}\rho(x)$ and $b:=\max_{x\in \partial X}|x, s|$. By the previous theorems we have for the case $\kappa=0, A>0$:
\begin{equation}\label{eq:equality:0}
a\le \rad(X)\le b\le \sqrt{2\frac aA-a^{2}}\le \frac 1A;
\end{equation}
and for the case $\kappa=1, A>0$:
\begin{equation}\label{eq:equality:1}
a \le \rad(X)\le b\le \arccos\left( \frac{A}{A\cos a+\sin a}\right)\le \arccot(A).
\end{equation}
For simplicity, we will refer the terms in the \eqref{eq:equality:0} and \eqref{eq:equality:1} as $\circled{1}$ to $\circled{5}$ from the left to the right.
\begin{prop}[$\circled{1}=\circled{5}$, \cite{AB2010}]\label{prop:rigidty15}
The equality $a=\frac1A$ in \eqref{eq:equality:0} (resp. $a=\arccot(A)$ in \eqref{eq:equality:1}) implies that the space $X$ is isometric to the cone $[0, a]\times_{t} \partial X$ (resp. $[0, a]\times_{\sin t}\partial X$).
\end{prop}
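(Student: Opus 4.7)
The plan is to exploit the differential inequality of Theorem \ref{thm:AB} in its equality form to pin down the distance function $\rho$ along radial geodesics from the soul, and then use this rigidity to build a warped-product isometry.

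Set $R = R(\kappa, A)$, so $R = 1/A$ when $\kappa = 0$ and $R = \arccot(A)$ when $\kappa = 1$. The equality hypothesis $a = R$ says that $\cD = R - \rho$ vanishes at the soul $s$, hence $f = \mdk(\cD)$ attains its global minimum value $0$ at $s$ (as $\cD \geq 0$ everywhere by Theorem \ref{thm:AB}). Thus for every unit-speed geodesic $\gamma$ starting at $s$ we have $(f\circ\gamma)(0) = 0$ and $(f\circ\gamma)'(0) \geq 0$.

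The first step is to sandwich $f\circ\gamma$ between two copies of $\mdk(t)$. Lemma \ref{lem:ode} applied to $f\circ\gamma$ against the model $\tilde f(t) = \mdk(t)$, which satisfies the equality ODE with $\tilde f(0) = \tilde f'(0) = 0$, gives $f(\gamma(t)) \geq \mdk(t)$. Conversely, the $1$-Lipschitz property of $\rho$ forces $\rho(\gamma(t)) \geq R - t$, hence $\cD(\gamma(t)) \leq t$, and by monotonicity of $\mdk$ on $[0, R]$ one obtains $f(\gamma(t)) \leq \mdk(t)$. Combining these forces $\rho(\gamma(t)) = R - t$ identically on $[0, R]$. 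In particular, every unit-speed geodesic from $s$ of length $R$ terminates on $\bx$, and every boundary point lies at distance exactly $R$ from $s$. With this in hand, one defines the candidate isometry $\Phi : [0, R] \times \Sigma_s X \to X$ by $\Phi(t, v) = \gexp_s(tv)$ and identifies the second factor with $\bx$ via the map $\Phi(R, \cdot)$. Surjectivity of $\Phi$ follows from the radial foliation: any interior point $y$ together with a foot point $q \in \bx$ for $y$ yields a segment $[q, y]$ that extends, since $\rho$ is strictly decreasing at unit rate along every $\rho$-gradient curve from $s$, into one of the radial geodesics above.

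The main obstacle is the transverse rigidity, i.e. showing that $\Phi$ pulls back the metric on $X$ to $dt^2 + t^2 g_{\bx}$ when $\kappa = 0$ and to $dt^2 + \sin^2(t)\, g_{\bx}$ when $\kappa = 1$. Toponogov hinge comparison at $s$ provides the upper bound on $|\Phi(t, v_1), \Phi(t, v_2)|$ in terms of the $\kappa$-warped distance. The matching lower bound is obtained by applying Theorem \ref{thm:AB} in the equality case to one-parameter transverse variations of the radial geodesics: since the concavity estimate is saturated in the radial direction, the comparison must be an equality in transverse directions as well, so the geodesic cone over $\Sigma_s X$ unfolds rigidly into $X$. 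This is the standard Alexandrov warped-product rigidity scheme of \cite{AB2010}, and once it is in place $\Phi$ is the required isometry.
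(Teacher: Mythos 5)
The paper offers no proof of this proposition at all: it is quoted from Alexander--Bishop (it is their Corollary~1.10, and the paper invokes it as such again in the proof of \autoref{thm:fill01}), so there is no in-paper argument to compare yours against. Judged on its own terms, the first half of your reconstruction is correct and cleanly done: from $a=R(\kappa,A)$ you get $f(s)=0$ with $f\ge 0$ globally, the two-sided sandwich $\mdk(t)\le f(\gamma(t))\le \mdk(t)$ (the lower bound from \autoref{lem:ode}, the upper bound from $\rho$ being $1$-Lipschitz), hence $\rho(\gamma(t))=R-t$ along every geodesic issuing from $s$, and consequently every boundary point lies at distance exactly $R$ from $s$. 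One small point to record there: \autoref{lem:ode} requires $f'(0)=\tilde f'(0)$, and you only assert $(f\circ\gamma)'(0)\ge 0$; this is harmless because $\mdk'(0)=0$ forces $(f\circ\gamma)'(0)=0$ exactly, but you should say so rather than apply the lemma with mismatched initial slopes.

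The genuine gap is in the second half. The sentence ``since the concavity estimate is saturated in the radial direction, the comparison must be an equality in transverse directions as well'' is not an argument --- it is precisely the warped-product rigidity theorem you are trying to prove. \autoref{thm:AB} is a statement about the single function $\rho$, and its saturation along radial geodesics does not by itself control the distance between points on two \emph{distinct} radial geodesics; the Toponogov hinge comparison at $s$ gives you the upper bound $|\Phi(t,v_1),\Phi(t,v_2)|\le$ (model hinge distance), but the matching lower bound requires a separate mechanism, e.g.\ running the concavity/development comparison along an arbitrary geodesic of $X$ (not just radial ones) and showing equality there, or showing that the gradient flow of $\rho$ restricts to homotheties between the level sets $\rho^{-1}(c)$. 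That is the actual content of Alexander--Bishop's splitting argument. You also need to justify identifying $\Sigma_s$ (with its angle metric) with $\partial X$ (with its induced intrinsic metric) so that the cone is over $\partial X$ as the statement asserts, rather than over $\Sigma_s$. Since the proposition is attributed to \cite{AB2010}, deferring these steps to that reference is defensible, but as written your proof asserts the crux rather than proving it.
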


As one can see in our proof of \autoref{thm:b:k=1} and \autoref{thm:b:k=0}, the same type of rigidity holds, that is.
	
\begin{prop}[$\circled{2}=\circled{5}$]\label{prop:rigidty25}
The equality $\rad(X)=\frac1A$ in \eqref{eq:equality:0} (resp. $\rad(X)=\arccot(A)$ in \eqref{eq:equality:1}) implies that the space $X$ is isometric to the cone $[0, a]\times_{t} \partial X$ (resp. $[0, a]\times_{\sin t}\partial X$).
\end{prop}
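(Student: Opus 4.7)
The plan is to leverage the full chain of inequalities already established in \eqref{eq:equality:0} and \eqref{eq:equality:1} and show that the hypothesis $\circled{2}=\circled{5}$ forces $\circled{1}=\circled{5}$, after which we can invoke \autoref{prop:rigidty15} as a black box.

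First I would run the case $\kappa=0$. Assuming $\rad(X)=1/A$, the chain
$$a\le\rad(X)\le b\le\sqrt{2a/A-a^{2}}\le 1/A$$
collapses, so in particular $\circled{4}=\circled{5}$, i.e.\ $\sqrt{2a/A-a^{2}}=1/A$. Squaring gives $A^{2}a^{2}-2Aa+1=(Aa-1)^{2}=0$, hence $a=1/A$. This is precisely the hypothesis of \autoref{prop:rigidty15}, which delivers the warped-product rigidity $X\cong[0,a]\times_{t}\partial X$.

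Next I would treat $\kappa=1$ the same way. Assuming $\rad(X)=\arccot(A)$, the chain \eqref{eq:equality:1} collapses, giving $\arccos\!\left(\tfrac{A}{A\cos a+\sin a}\right)=\arccot(A)$. Since $\cos\arccot(A)=A/\sqrt{1+A^{2}}$, this translates to $A\cos a+\sin a=\sqrt{1+A^{2}}$. Writing $A\cos a+\sin a=\sqrt{1+A^{2}}\sin\!\left(a+\arctan A\right)$, the equation $\sin(a+\arctan A)=1$ together with $a\in[0,\arccot A]$ forces $a+\arctan A=\pi/2$, i.e.\ $a=\arccot(A)$. Again \autoref{prop:rigidty15} finishes the job.

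There is essentially no obstacle to expect here: the nontrivial analytic work (the concavity theorem of Alexander--Bishop and the warped-product rigidity at the inner-radius extreme) has already been done. The only content in \autoref{prop:rigidty25} is the elementary observation that the middle inequalities in \eqref{eq:equality:0} and \eqref{eq:equality:1} are sharp only at the endpoint $a=1/A$ (resp.\ $a=\arccot(A)$). The one small point to double-check is the monotonicity used to extract $a=\arccot(A)$ from the trigonometric identity in the $\kappa=1$ case, which is immediate from $a\le\arccot(A)\le\pi/2$ and the fact that $a+\arctan(A)$ lies in $[0,\pi/2]$.
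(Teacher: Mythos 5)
Your proposal is correct and is essentially the argument the paper intends (the paper only gestures at it): equality at $\circled{2}=\circled{5}$ collapses the chain, so $\circled{4}=\circled{5}$ forces $a=\tfrac1A$ (resp.\ $a=\arccot A$, using $A>0$ as in \eqref{eq:equality:1}), and then \autoref{prop:rigidty15} gives the warped-product rigidity. The algebra in both cases checks out.
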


The following example shows that class of spaces satisfying $\rad(X)=b=\sqrt{2\frac aA-a^{2}}$ or $\rad(X)=b=\arccos\left( \frac{A}{A\cos a+\sin a}\right)$ are very large.
\begin{example}[\circled{2}=\circled{4}]\label{ex:cutthetip}
We construct $X\in\alex^{3} (0)$, and $\BA(\partial X)\ge A\geq 0$ in the Euclidean space $\RR^3$ as the intersection of three balls centered at $(\frac{1}{A}-a,0,0),$ $-(\frac{1}{A}-a,0,0),$ and $(0,\sqrt{2\frac{a}{A}-a^2}-\epsilon-\frac{1}{A},0)$ respectively, where $a<\frac{1}{A}$, $\epsilon<\sqrt{2\frac{a}{A}-a^2}-a$. Soul of $X$ is the origin of $\RR^3$, the inner radius of $X$ is $a$ while the radius is $\sqrt{2\frac{a}{A}-a^2}$, which is also the distance from the soul to the boundary of $X$. A similar example in $\alex^{3}(1)$ can be constructed easily.
\end{example}

\begin{prop}[\circled{1}=\circled{4}]\label{prop:rigidty14}
The equality $a=\sqrt{2\frac aA-a^{2}}$ in \eqref{eq:equality:0} (resp. $a=\arccot(A)$ in \eqref{eq:equality:1}) follows the equivalent in \ref{prop:rigidty15}, thus that the space $X$ is isometric to the cone $[0, a]\times_{t} \partial X$ (resp. $[0, a]\times_{\sin t}\partial X$).
\end{prop}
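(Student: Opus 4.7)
The plan is to show that the hypothesis $\circled{1}=\circled{4}$ is purely an algebraic relation between the inner radius $a$ and the convexity bound $A$, and that this relation already forces $a$ to saturate the Alexander-Bishop upper bound from \autoref{cor:inner}. In other words, I will argue that $\circled{1}=\circled{4}$ implies $\circled{1}=\circled{5}$, at which point \autoref{prop:rigidty15} yields the warped product structure directly.

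For $\kappa=0$ with $A>0$, I would square the hypothesis $a=\sqrt{2a/A-a^{2}}$ to obtain $2a^{2}=2a/A$; since $a>0$, this forces $a=1/A$. For $\kappa=1$, the hypothesis $a=\arccos\bigl(A/(A\cos a+\sin a)\bigr)$ rewrites as $\cos a\,(A\cos a+\sin a)=A$, i.e.\ $A\sin^{2}a=\sin a\cos a$; dividing by $\sin a$ (which is nonzero since the interior of $X$ is nonempty, so $a>0$), I obtain $\cot a=A$, that is $a=\arccot(A)$. Thus in each case $a$ attains the Alexander-Bishop upper bound.

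With $\circled{1}=\circled{5}$ established, \autoref{prop:rigidty15} gives the warped product conclusion. No genuine obstacle arises: the content of the proposition is simply the elementary observation that the tightness $\circled{1}=\circled{4}$ collapses the chain \eqref{eq:equality:0} (resp.\ \eqref{eq:equality:1}) all the way down to the Alexander-Bishop rigidity, so that no separate analysis of the function $\cD$ or of the distance function along a maximising geodesic is required beyond what is already contained in \autoref{prop:rigidty15}.
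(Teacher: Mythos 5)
Your proposal is correct and follows exactly the route the paper intends: the proposition is stated as a reduction to \autoref{prop:rigidty15}, and your algebraic verification that $a=\sqrt{2a/A-a^{2}}$ forces $a=1/A$ (resp.\ that $\cos a\,(A\cos a+\sin a)=A$ forces $\cot a=A$) is precisely the missing computation. You also correctly read the $\kappa=1$ hypothesis as $a=\arccos\bigl(A/(A\cos a+\sin a)\bigr)$, which is what the statement must mean despite its typo.
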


The case $\kappa=1, A=0$ contains all positively curved Alexandrov spaces with boundary. The upper bound \circled{5} in \eqref{eq:equality:1} is $\pi/2$. In this case, the following rigidity theorem is proved by Petersen and Grove
\begin{prop}[\cite{GP2018}]\label{prop:PositiveRigidity}
Let $X\in \alex^{n}(1)$ and $\partial X$ is intrinsically isometric to $\SS^{n-1}$. Then $X$ is isometric to the lens $L_{\alpha}^{n}=[0, \alpha]*\SS^{n-2}$ for some $0<\alpha\le \pi$, where $*$ is the spherical join.
\end{prop}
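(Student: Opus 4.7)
The plan is to combine two inputs: Petrunin's sharp volume estimate $\vol_{n-1}(\partial X)\le \vol_{n-1}(\SS^{n-1})$ for $X\in\alex^{n}(1)$ together with its rigidity, and the strong structural rigidity of the round sphere as the intrinsic model of $\partial X$. Since the intrinsic metric on $\partial X$ is that of $\SS^{n-1}$, the two volumes coincide, so we are in the equality case of Petrunin's estimate, and any witness of that inequality -- in particular the gradient exponential map from $\partial X$ -- must be forced into a rigid form.

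Concretely, I would analyze the inward gradient exponential map $\gexp\colon \partial X\times [0,\infty)\to X$ introduced by Petrunin, which flows each boundary point inward along the gradient of the distance-to-boundary function $\rho$. Petrunin's inequality is obtained by showing that $\gexp$ is volume non-increasing when the domain is weighted by $\cos^{n-1}(t)\,dt\,d\vol_{\partial X}$. The rigidity part of this inequality should force: (i) $\gexp$ is measure preserving onto $X$; (ii) distinct gradient trajectories do not branch except possibly at the soul $s$ from \autoref{cor:inner}; and (iii) the Jacobi-field comparison along each trajectory becomes an equality, so along the flow the local sectional curvature is exactly $+1$ and $X$ carries a warped-product structure with warping factor $\cos(t)$.

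Next, I use the hypothesis $\partial X\cong \SS^{n-1}$ to pick a great $(n-2)$-subsphere $E\subset \partial X$ dividing $\partial X$ into two hemispheres $H_+$ and $H_-$. The non-branching from (ii) pairs each point of $H_+$ with a unique ``antipodal'' point of $H_-$ along a gradient trajectory; all such trajectories have the same length $\alpha\in (0,\pi]$, forced by the Jacobi rigidity and by the fact that antipodal boundary points realize the intrinsic spherical distance $\pi$. Parameterizing $X$ by arc length along the gradient trajectory together with a point in $H_+$, and invoking (iii), one checks that the resulting metric coincides with that of $L_\alpha^{n}=[0,\alpha]*\SS^{n-2}$.

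The main obstacle is the rigidity analysis of the second step. In a non-smooth Alexandrov space one cannot appeal to ordinary Jacobi-field equations; instead one has to work directly with Petrunin's gradient exponential, carefully tracking the single volume inequality to an equality on every infinitesimal piece of the flow. A secondary difficulty is to show that the hemispheric decomposition of $\partial X$ propagates consistently into the interior -- i.e., that the resulting parameter $\alpha$ is independent of the chosen great subsphere $E$ -- so that the identification with a spherical lens is canonical.
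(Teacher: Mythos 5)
First, a point of comparison: the paper offers no proof of this proposition at all --- it is quoted from Grove--Petersen \cite{GP2018} as a known result --- so there is no internal argument to measure yours against. Judged on its own terms, your proposal has a near-circularity at its core and, more seriously, an intermediate claim that is false. The circularity: the entire content of the Grove--Petersen theorem is precisely the rigidity analysis of the equality case of Petrunin's volume estimate, which you list as items (i)--(iii) and then defer to ``the main obstacle.'' Petrunin's inequality dates from 2007 and its rigidity resisted proof for over a decade; asserting that equality ``must force'' measure preservation, non-branching, and pointwise curvature $+1$ is assuming the theorem rather than proving it. In particular, ``local sectional curvature exactly $+1$ along the flow'' is not something one can extract from a single integrated volume identity in a non-smooth Alexandrov space, and Petrunin's map is the gradient exponential based at the soul $s$, not a normal flow off $\partial X$ weighted by $\cos^{n-1}t$.

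The false step is (iii) together with your hemispheric pairing. Equality in the volume bound cannot force $X$ to be a warped product $[0,\cdot]\times_{\cos t}\partial X$: every lens $L^{n}_{\alpha}$ with $0<\alpha<\pi$ satisfies both the hypothesis and the conclusion of the proposition (its boundary is two totally geodesic unit hemispheres glued along their equator, hence intrinsically the round $\SS^{n-1}$ of full volume), yet it is a warped product over its boundary only when $\alpha=\pi$. For $\alpha<\pi$ the distance function to $\partial L^{n}_{\alpha}$ attains its maximum $\alpha/2<\pi/2$ at a single soul point, its level sets degenerate, and every inward gradient trajectory terminates at that soul rather than crossing from a point of $H_{+}$ to an ``antipodal'' point of $H_{-}$; so the proposed pairing of hemispheres by gradient trajectories of common length $\alpha$ fails already in the very model space you are trying to recover. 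Any correct argument has to produce the join parameter $\alpha$ from data other than a boundary-to-boundary flow --- for instance from the soul, the forced identity $b=\rad_{s}(X)=\pi/2$, and the structure of $\Sigma_{s}$ --- rather than from a global warped-product structure over $\partial X$, which simply is not there.
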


\section{The Filling of Round Sphere}
In this section, we prove \autoref{thm:fill01}. The volume estimate uses the same idea as Petrunin's in \cite{Pet2007} we include it only for completeness. The rigidity part uses our discussion on the equality case in the previous section.
\begin{proof}[Proof of \autoref{thm:fill01}]
For $X\in \alex^{n}(0)$ with no empty boundary, the distance function to the boundary is concave in $X$. Thus the gradient exponential map $\gexp_s$ maps $\overline B_b(o_s)$ onto $X$. Moreover $\gexp_s$ also gives a homotopy equivalence of $\partial B_b(o_s)=\Sigma_{s}$ and $X\setminus \{s\}$, which is homotopy to $\partial X$, by noting that the soul $s$ is the only critical point of the distance function to $\partial X$. Since $\Sigma_{s}$ is a compact Alexandrov space without boundary, we have $H_{n-1}(\bx,\ZZ_2) \neq 0$. Hence $\forall x\in \bx$, the geodesic $sx$ must have a point of $\gexp_s\big(\partial\overline B_b(o_s)\big)$. Since the inverse of the gradient exponential map $\gexp_s^{-1}$ is uniquely defined inside any geodesic starting at $s$, it can only be $x$ for $\gexp_s$ is a short map. Thus
$$
\bx\subset \gexp_s(\partial B_b(o_s)).
$$
On another hand, Using gradient exponential map is a distance non-increasing map, we have
\begin{align*}
\vol_{n-1}(\bx)&\leq \vol_{n-1}\big(\gexp_s(\partial B_b(o_s)) \big)\\
&\leq \vol_{n-1}\big(\partial B_b(o_s)\big)         \\
&\leq \vol_{n-1}\big(\SS^{n-1}(b)\big)              \\
&\leq \vol_{n-1}\big(\SS^{n-1}(1)\big).
\end{align*}
If $\partial X$ is intrinsically isometric to $\SS^{n-1}$, the previous inequalities implies $b=1$.
Recall
$$b\leq\sqrt{2a-a^2}\leq 1,$$
we get $a=1$ thus by the Corollary 1.10 in \cite{AB2010}, $X$ is isometric to the ball of radius $1$ about the vertex in a $0$-cone over it's boundary. Since $\partial X$ is isometric to $\SS^{n-1}$,  such cone is $\RR^{n}$, therefore the conclusion holds.
\end{proof}
	
\bibliographystyle{alpha}
\bibliography{mybib}

\end{document}